\RequirePackage[2020-02-02]{latexrelease}
\documentclass[11pt,final,tightenlines,notitlepage,twoside,onecolumn,
nobibnotes,nofootinbib,superscriptaddress]{rcd-nd-eng}

\usepackage{graphicx}
\usepackage{wrapfig}
\usepackage{substr}
\usepackage[tight]{minitoc}
\usepackage{subfigure}
\usepackage{longtable}
\usepackage[all]{xy}
\usepackage{algorithm}
\usepackage{algpseudocode}
\usepackage{array}
\usepackage{float}
\usepackage{placeins}

\renewcommand\proofname{Proof}
\rcdthmsep=\medskipamount
\newrcdthm{teo}{Theorem}
\newrcdthm{lem}{Lemma}
\newrcdthm{pro}{Proposition}
\newrcdthm{assum}{Assumption}
\newrcdthm{cor}{Corollary}
\newrcdthm{fed}{Definition}
\newrcdthm[remark]{rem}{Remark}
\newrcdthm[example]{exl}{Example}
\newrcdthm[example]{com}{Comment}

\newcommand{\R}{\mathbb{R}}
\newcommand{\E}{\mathbb{E}}

\newcommand{\F}{\mathbf F}
\newcommand{\e}{\varepsilon}
\newcommand{\widetO}{\widetilde O}

\newcommand{\eqdef}{:=}
\makeatletter

\AtBeginDocument{}
\def\MakeUppercase#1{#1}

\makeatother

\begin{document}

\def\mtctitle{Contents}%
\def\mtcSfont{\small\rmfamily\upshape\mdseries}   
\makeatletter
\let\mtc@rule=\relax
\makeatother
\allowdisplaybreaks

\widowpenalty =10000
\clubpenalty = 10000

\setcounter{minitocdepth}{2}

\renewcommand{\hm}[1]{#1\nobreak\discretionary{}%
{\hbox{$\mathsurround=0pt #1$}}{}}


\ifx  \Commentx \Undefined
      \long \def \Commentx #1\EndCommentx {}
\else
      \ifx  \EndCommentx \Undefined
            \def \EndCommentx
                {\message {Error: \noexpand \EndCommentx encountered
                                without preceding \noexpand \Commentx}}
      \else 
      \fi
\fi

\Commentx 
This file, "letterspacing.tex", created by Philip Taylor of RHBNC
(P.Taylor@Mail.Rhbnc.Ac.Uk) for Kaveh Bazargan of Focal Image
(Kaveh@Focal.Demon.Co.Uk) may be freely distributed provided that
no changes whatsoever are made to its contents
\EndCommentx

\Commentx 
As far as is possible, `inaccessible' control sequences (i.e.~control sequences
containing one or more commercial-at$\,$s) are used to minimise the risk of
accidental collision with user-defined macros; however, the control sequences
used to access the natural dimensions of the text are required to be user
accessible, and therefore must contain only letters.
\EndCommentx


\newdimen \naturalwidth
\newdimen \naturaldepth
\newdimen \naturalheight

\Commentx 
All control sequences defined hereafter are inaccessible to the casual user,
including the control sequence used to store the category code of commercial-at;
this catcode must be saved in order to be able to re-instate it at end of
module, as we have no idea in what context the module will be |\input|. Having
saved the catcode of commercial-at, we then change it to |11| (i.e.~that of a
letter) in order to allow it to be used in the cs-names which follow.
\EndCommentx


\expandafter \chardef
        \csname \string @code\endcsname =
                \the \catcode `\@
\catcode `\@ = 11

\Commentx 
We will need a a box register in order to measure the natural dimensions of
the text, and a token-list register in which to save the tokens to be
letter-spaced.
\EndCommentx

\newbox \l@tterspacebox
\newtoks \l@tterspacetoks

\Commentx 
We will need to test whether a particular macro expands to a space, so we will
need another macro which does so expand with which to compare it; we will
also need a `more infinite' variant of |\hss|.
\EndCommentx

\def \sp@ce { }
\def \hsss
    {\hskip 0 pt plus 1 fill minus 1 fill\relax}

\Commentx 
Many of the macros will used delimited parameter structures, typically
terminated by the reserved control sequence |\@nd|; we make this a synonym
for the empty macro (which expands to nothing), so that should it accidentally
get expanded there will be no side effects.  We also define a brief synonym
for |\expandafter|, just to keep the individual lines of code reasonably short.
\EndCommentx

\let \@nd = \empty
\let \@x = \expandafter

\Commentx 
We will also need to compare a token which has been peeked at by |\futurelet|
with a space token; because of the difficulty of accessing such a space
token (which would usually be absorbed by a preceding control word), we
establish |\sp@cetoken| as a synonym.  The code to achieve this is messy,
because of the very difficulty just outlined, and so we `waste' a control
sequence |\t@mp|; we then return this to the pool of undefined tokens.
\EndCommentx

\let \sp@cetoken = \relax
\edef \t@mp {\let \sp@cetoken = \sp@ce}
\t@mp \let \t@mp = \undefined

\Commentx 
The user-level macro |\letterspace| has exactly the same syntax as that for
|\hbox| and |\vbox|, as explained in the introduction; the delimited parameter
structure for this macro ensures that everything up to the open brace which
delimits the beginning of the text to be letter-spaced is absorbed as parameter
to the macro, and the brace-delimited text which follows is then assigned to the
token-list register |\l@tterspacetoks|; |\afterassignment| is used to regain
control once the assignment is complete.
\EndCommentx


\def \letterspace #1#%
    {\def \hb@xmodifier {#1}%
     \afterassignment \l@tterspace
     \l@tterspacetoks =
    }

\Commentx 
Control then passes to |\l@tterspace|, which starts by setting an |\hbox|
containing the text to be typeset; the dimensions of this box (and therefore
of the text) are then saved in the open control sequences previously declared,
and the |\hbox| becomes of no further interest.

A new |\hbox| is now created, in which the same text, but this time
letter-spaced, will be set; the box starts and ends with |\hss| glue so that if
only a single character is to be letter-spaced, it will be centered in the box.
If two or more characters are to be letter-spaced, they will be separated by
|\hsss| glue, previously declared, which by virtue of its greater degree of
infinity will completely override the |\hss| glue at the beginning and end;
thus the first and last characters will be set flush with the edges of the box.

The actual mechanism by which letter-spacing takes place is not yet apparent,
but it will be seen that it is crucially dependent on the definition of
|\l@tt@rspace|, which is expanded as the box is being set; the |\@x|
(|\expandafter|) causes the actual tokens stored in |\l@tterspacetoks| to be
made available as parameter to |\l@tt@rspace| rather than the token-list
register itself, as it is these tokens on which |\l@tt@rspace| will operate.
The |\@nd| terminates the parameter list, and the |{}| which immediately
precedes it ensures that that there is always a null element at the end of the
list: without this, the braces which are needed to protect an accent/character
pair would be lost if such a pair formed the final element of the list, at the
point where they are passed as the second (delimited) parameter to |\p@rtiti@n|;
by definition, <TeX> removes the outermost pair of braces from both simple and
delimited parameters during parameter substitution if such braces form the first
and last tokens of the parameter, and thus if a brace-delimited group ever
becomes the second element of a two-element list, the braces will be irrevocably
lost.  The |{}| ensure that such a situation can never occur.
\EndCommentx


\def \l@tterspace
    {\setbox \l@tterspacebox = \hbox
                {\the \l@tterspacetoks}%
     \naturalwidth =  \wd \l@tterspacebox
     \naturaldepth =  \dp \l@tterspacebox
     \naturalheight = \ht \l@tterspacebox
     \hbox \hb@xmodifier
     \bgroup
           \hss
           \@x \l@tt@rspace
           \the \l@tterspacetoks {}\@nd
           \hss
     \egroup
    }

\Commentx 
The next macro is |\l@tt@rspace|, which forms the crux of the entire operation.
The text to be letter-spaced is passed as parameter to this macro, and the
first step is to check whether there is, in fact, any such text; this is
necessary both to cope with pathological usage (e.g.~|\letterspace {}|), and to
provide an exit route, as the macro uses tail-recursion to apply itself
iteratively to the `tail' of the text to be letter-spaced; when no elements
remain, the macro must exit.

Once the presence of text has been ensured, the token-list representing
this text is partitioned into a head (the first element), and the tail
(the remainder);  at each iteration, only the head is considered, although
if the tail is empty (i.e.~the end of the list has been reached), special
action is taken.

If the first element is a space, it is treated specially by surrounding it
with two globs of |\hsss| glue, to provide extra stretchability when compared
to the single glob of |\hsss| glue which will separate consecutive non-space
tokens; otherwise, the element itself is yielded, followed by a single glob
of |\hsss| glue.  This glue is suppressed if the element is the last of the
list, to ensure that the last token aligns with the edge of the box.

When the first element has been dealt with, the macro uses tail
recursion to apply itself to the remaining elements (i.e.~to the tail);
|\@x| (|\expandafter|) is again used to ensure that the tail is expanded
into its component tokens before being passed as parameter.
\EndCommentx


\def \l@tt@rspace #1\@nd
    {\ifx  \@nd #1\@nd
           \let \n@xt = \relax
     \else
           \p@rtition #1\@nd
           \ifx \h@ad \sp@ce \hsss \h@ad \hsss
           \else
                \h@ad
                \ifx \t@il \@nd \else \hsss \fi
           \fi
           \@x \def \@x \n@xt \@x
                {\@x \l@tt@rspace \t@il \@nd}%
     \fi
     \n@xt
    }

\Commentx 
The operation of token-list partitioning is conceptually simple: one passes the
token list as parameter text to a macro defined to take two parameters, the
first simple and the second delimited; the first element of the list will be
split off as parameter-1, and the remaining material passed as parameter-2.
Unfortunately this na\"\i ve approach fails when the first element is a bare
space, as the semantics of <TeX> prevent such a space from ever being passed as
a simple parameter (it could be passed as a delimited parameter, but as one
does not know what token follows the space, defining an appropriate delimiter
structure would be tricky if not impossible).  The technique used here relies
upon the adjunct macro |\m@kespacexplicit|, which replaces a leading bare space
by a space protected by braces; such spaces may legitimately be passed as simple
parameters.  Once that operation has been completed, the re-constructed token
list is passed to |\p@rtiti@n|, which actually performs the partitioning as
above; again |\@x| (|\expandafter|) is used to expand |\b@dy| (the
re-constructed token list) into its component elements before being passed as
parameter text.
\EndCommentx


\def \p@rtition #1\@nd
    {\m@kespacexplicit #1\@nd
     \@x \p@rtiti@n \b@dy \@nd
    }

\def \p@rtiti@n #1#2\@nd
    {\def \h@ad {#1}%
     \def \t@il {#2}%
    }

\Commentx 
The operation of making a space explicit relies on prescience: the code needs
to know what token starts the token list before it knows how to proceed.
Prescience in <TeX> is usually accomplished by |\futurelet|, and this code
is no exception: |\futurelet| here causes |\h@ad| to be |\let| equal to
the leading token, and control is then ceded to |\m@kesp@cexplicit|.

The latter compares |\h@ad| with |\sp@cetoken| (remember the convolutions we
had to go through to get |\sp@cetoken| correctly defined in the first place),
and if they match (i.e.~if the leading token is a space), then |\b@dy|
(the control sequence through which the results of this operation will
be returned) is defined to expand to a protected space (a space surrounded by
braces), followed by the remainder of the elements; if they do not match
(i.e.~if the leading token is \stress {not} a space), then |\b@dy| is simply
defined to be the original token list, unmodified.  If the leading token
\stress {was} a space, it must be replaced by a protected space: this is
accomplished by |\pr@tectspace|.

The |\pr@tectspace| macro uses a delimited parameter structure, as do most
of the other macros in this suite, but the structure used here is unique,
in that the initial delimiter is a space.  Thus, when a token-list starting
with a space is passed as parameter text, that space is regarded as matching
the space in the delimiter structure and removed; the expansion of the macro
is therefore the tokens remaining once the leading space has been removed,
preceded by a protected space |{ }|.
\EndCommentx


\def \m@kespacexplicit #1\@nd
    {\futurelet \h@ad \m@kesp@cexplicit #1\@nd}

\def \m@kesp@cexplicit #1\@nd
    {\ifx \h@ad \sp@cetoken
          \@x \def \@x \b@dy
                     \@x {\pr@tectspace #1\@nd}%
     \else
          \def \b@dy {#1}%
     \fi
    }%

\@x \def \@x \pr@tectspace \sp@ce #1\@nd {{ }#1}

\Commentx 
The final step is to re-instate the category code of commercial-at.
\EndCommentx


\catcode `\@ = \the \@code

\Commentx 
Thus letter-spacing is accomplished.  The author hopes both that the code will
be found useful and that the explanation which accompanies it will be found
informative and interesting.
\EndCommentx

\def\trk#1#2{\mbox{}\letterspace to #1\naturalwidth{#2}}


\renewcommand\refname{References}
\renewcommand{\figurename}{Fig.}


\JournalName{Russian Journal of Nonlinear Dynamics}
\JournalNameEng{Rus. J. Nonlin. Dyn.}
\OrigYearOfIssue{2018}
\OrigVolumeNo{14}
\OrigIssueNo{2}
\NDcite{http://nd.ics.org.ru}


\setcounter{page}{1}

\Rubrika{\relax}
\CRubrika{\relax}
\SubRubrika{\relax}

\beginpaper

\amsmsc{90C25, 90C15, 49M15, 68W15}
\doi{10.20537/nd000000}
\titlerunning{Optimal Inexact Second-Order Acceleration for Distributed Optimization}
\title{Application of Optimal Inexact Second-Order Acceleration to Distributed Stochastic Optimization under Statistical Similarity}
\toctitleeng{Application of Optimal Inexact Second-Order Acceleration to Distributed Stochastic Optimization under Statistical Similarity}

\authorrunning{Y. Sokolov et al.}
\tocauthoreng{Y. A. Sokolov, M. K. Mashtaler, A. V. Gasnikov, M. Tak\'a\v{c}, D. I. Kamzolov, and A. A. Agafonov}

\author{Yury A. Sokolov}
\info{Yury A. Sokolov}{sokolov.ua@mipt.ru}
{Moscow Institute of Physics and Technology}

\author{Maxim K. Mashtaler}
\info{Maxim K. Mashtaler}{mashtaler.mk@phystech.edu}
{Moscow Institute of Physics and Technology}

\author{Alexander V. Gasnikov}
\info{Alexander V. Gasnikov}{gasnikov@yandex.ru}
{Innopolis University\\ Moscow Institute of Physics and Technology}

\author{Martin Tak\'a\v{c}}
\info{Martin Tak\'a\v{c}}{martin.takac@mbzuai.ac.ae}
{Mohamed bin Zayed University of Artificial Intelligence}

\author{Dmitry I. Kamzolov}
\info{Dmitry I. Kamzolov}{kamzolov.opt@gmail.com}
{Independent Researcher}

\author{Artem D. Agafonov}
\info{Artem D. Agafonov}{agafonov.artem98@gmail.com}
{Mohamed bin Zayed University of Artificial Intelligence \\ Moscow Institute of Physics and Technology}

\received 00.00.0000.
\accepted 00.00.0000.
\grants{The research was supported by Russian Science Foundation (project No. 23-11-00229-\foreignlanguage{russian}{П}), https://rscf.ru/en/project/23-11-00229/.}

\begin{abstract}
We consider distributed stochastic convex optimization with a fixed budget of
$N$ independent samples split among $m$ workers. Sample average approximation
reduces the problem to a regularized finite-sum problem whose local Hessians
are statistically similar. This allows the Hessian of the local objective at
the server to be used as an inexact Hessian of the global objective, while the
workers communicate only gradients. We apply the optimal accelerated inexact
Newton extragradient method of~\cite{chen2026optimal} and propose its
distributed restarted variant for the strongly convex empirical problem. The
method reaches the statistical
accuracy of order $N^{-1/2}$ in
$\widetO\left(\max\{N^{1/7},m^{1/4}\}\right)$ communication rounds. Hence,
with $m=N^{4/7}$ workers, it requires $\widetO\left(N^{1/7}\right)$ rounds,
improving the dependence on the total sample size from
$\widetO\left(N^{1/6}\right)$ for the previous accelerated cubic Newton
construction of~\cite{agafonov2021accelerated}. Each iteration uses two
gradient aggregation rounds and does not require Hessian communication.
\end{abstract}
\keywords{stochastic optimization, statistical similarity,
distributed optimization, second-order optimization}

\maketitle\label{article_begin}
{
\newpage
\section{Introduction}

In this work, we consider the distributed stochastic optimization problem
\begin{equation}
\label{eq:stoch_problem}
\min \limits_{x\in\R^d} \lbrace \F(x) \eqdef \E_{\xi}f(x,\xi) \rbrace,
\end{equation}
where $\xi$ is a random variable, e.g. random data, and $f$ is convex and sufficiently smooth, which implies that $\F$ is convex.  Let $x^*$ be a
solution of~\eqref{eq:stoch_problem}. We assume access to $m$ workers and a
total fixed budget of $N$ independent realizations of $\xi$. Let $T$ denote
the number of communication rounds. Under the fixed sample budget $N$, we
are interested in reaching accuracy $\E \F(x_T)-\F(x^*)\leq\e$
using as few communication rounds $T$ and as many workers $m$ as possible.

In this work we use two smoothness assumptions. For all $\xi$, $f(x, \xi)$ is $L_0$-Lipschitz continuous 
\begin{equation}
    \label{eq:lip_func}
    |f(x, \xi) - f(y, \xi)| \leq L_0 \|x - y\|, \quad \forall x, y \in \R^d,
\end{equation}
and has $L$-Lipschitz continuous Hessian
\begin{equation*}
    \|\nabla^2 f(x, \xi) - \nabla^2 f(y, \xi)\| \leq L \|x - y\|, \quad \forall x, y \in \R^d.
\end{equation*}
Two standard approaches to problem~\eqref{eq:stoch_problem} are stochastic
approximation (SA)~\cite{robbins1951stochastic,polyak1990new, polyak1992acceleration,nemirovski2009robust,lan2012optimal} and sample average approximation (SAA)~\cite{shapiro2014lectures,dvinskikh2020stochastic}.

Agafonov et al.~\cite{agafonov2021accelerated} showed that second-order
information can reduce the communication complexity of SAA-based distributed
methods. Their analysis follows three steps. First, SAA reduces the stochastic
problem to a regularized strongly convex finite-sum problem. Second, the iid
partition of the sample yields statistical similarity between local and global
empirical Hessians. Third, the local Hessian at the master node is used as an
inexact Hessian of the global objective. An accelerated cubic Newton
method~\cite{ghadimi2017second,agafonov2024advancing} is then applied with
restarts. We use the same reduction and replace the accelerated cubic Newton
method with the recently proposed accelerated inexact Newton extragradient
(AINE) method~\cite{chen2026optimal}, which is optimal for convex optimization
with inexact Hessians. We next introduce the SAA reduction and statistical
similarity. Section~\ref{sec:distributed_aine} then formalizes the
inexact-Hessian model and gives the distributed AINE method and its restart
scheme.

\paragraph{Sample average approximation.} In this paper, we follow the SAA approach~\cite{agafonov2021accelerated} by sampling $N$ iid realizations $\xi^{k, j}$ in advance and constructing
regularized finite-sum problem
\begin{equation}
    \label{eq:finite_sum_problem}
    \min \limits_{x\in\R^d} \left \lbrace F(x) \eqdef
    \frac{1}{m}\sum_{k=1}^{m}f_k(x)
    +\frac{\mu}{2}\|x-x_0\|^2 \right \rbrace,
\end{equation}
where $f_k(x) = \tfrac{1}{n} \sum_{j=1}^n f(x, \xi^{k, j})$. The main idea of SAA approach is that $x^*$ can be approximated by the solution of~\eqref{eq:finite_sum_problem} $x_F^*$. We assume that $x^*$ and $x_F^*$ belong to the Euclidean ball centered at $x_0$ with radius $R$.

Following~\cite{agafonov2021accelerated}, by~\cite[Corollary 1.2]{feldman2019high} under the $L_0$-Lipschitz continuity assumption~\eqref{eq:lip_func} and an appropriate choice of the
regularization parameter $\mu =\frac{L_0 \log N}{R\sqrt N} $, with probability at least $1-\delta$,
$
\F(x_F^*)-\F(x^*)
\leq
O\left(
\frac{L_0R}{\sqrt N}
\log\frac{N}{\delta}
\right)
$.
Since $\F$ is $L_0$-Lipschitz continuous,
$
\F(x)-\F(x_F^*)
\leq
L_0\|x-x_F^*\|.
$
Thus, for $x=x_T$,
we obtain
\begin{equation}
\label{eq:saa_transfer}
\F(x_T)-\F(x^*)
\leq
L_0\|x_T-x_F^*\|
+
O\left(
\frac{L_0R}{\sqrt N}
\log\frac{N}{\delta}
\right).
\end{equation}
Thus, it is sufficient to find a good approximation $x_T$ to the solution
$x_F^*$ of the finite-sum problem~\eqref{eq:finite_sum_problem}.



\paragraph{Statistical similarity.} The finite-sum representation provides one more important property that can be exploited by distributed second-order methods. Under the assumption that $\xi^{k,j}$ are iid, the following bound holds for all $k$ with probability at least $1-\delta$:
\begin{equation}
\label{eq:statistical_similarity}
\sup_w\left\|
\frac{1}{m}\sum_{j=1}^{m}\nabla^2 f_j(w)-\nabla^2 f_k(w)
\right\|
\leq \beta =
\widetO\left(\sqrt{\frac{L L_0 d}{n}}\right).
\end{equation}

\paragraph{Distributed acceleration with inexact Hessians.}
We follow the approach of~\cite{agafonov2021accelerated} and use local Hessian at the server as an approximation of global Hessian. We propose the distributed version of the Accelerated Inexact Newton Extragradient method of~\cite{chen2026optimal} with optimal convergence rate $O\left(
\frac{\beta R^2}{T^2}
+
\frac{LR^3}{T^{7/2}} \right)$ in convex case.
For strongly convex case, applying the restart technique with the SAA transfer~\eqref{eq:saa_transfer} and similarity bound~\eqref{eq:inexact_hessian} we obtain
\begin{equation}
\label{eq:aine_population_rate}
\F(x_T)-\F(x^*)
\leq
\widetO\left(
\exp\left(
-\min\left\{
\frac{T}{N^{1/7}},
\frac{T}{m^{1/4}}
\right\}
\right)
+
\frac{1}{\sqrt N}
\right).
\end{equation}
The comparison with~\cite{agafonov2021accelerated} is summarized in
Table~\ref{tab:main_comparison}. The values of $T$ and $m$ in the table are
obtained by balancing the optimization and statistical errors and then using
as many workers as possible without increasing the communication complexity.

\begin{table}[H]
\caption{Comparison with the accelerated cubic Newton method of
\cite{agafonov2021accelerated}. }
\label{tab:main_comparison}
\centering
\small
\renewcommand{\arraystretch}{1.2}
\setlength{\tabcolsep}{4pt}
\begin{tabular}{|p{0.3\textwidth}|>{\centering\arraybackslash}p{0.4\textwidth}|c|c|}
\hline
Method & Bound & $T$ & $m$ \\
\hline
Distributed Accelerated cubic Newton~\cite{agafonov2021accelerated}
& $\widetO\left(
\exp\left(
-\min\left\{
\tfrac{T}{N^{1/6}},
\tfrac{T}{m^{1/4}}
\right\}
\right)
+\tfrac{1}{\sqrt N}
\right)$
& $N^{1/6}$
& $N^{2/3}$ \\
\hline
Distributed AINE [this work]
& $\widetO\left(
\exp\left(
-\min\left\{
\tfrac{T}{N^{1/7}},
\tfrac{T}{m^{1/4}}
\right\}
\right)
+\tfrac{1}{\sqrt N}
\right)$
& $N^{1/7}$
& $N^{4/7}$ \\
\hline
\end{tabular}
\end{table}

The new method improves the communication dependence on the sample size from
$N^{1/6}$ to $N^{1/7}$. The balanced number of workers changes from
$N^{2/3}$ to $N^{4/7}$, which makes the communication--parallelization trade-off
explicit.

\subsection{Related works}

\paragraph{Distributed optimization under statistical similarity.}
Statistical similarity, also referred to as second-order similarity or Hessian
similarity, has been extensively used to reduce the communication complexity of
distributed empirical risk minimization. DANE~\cite{shamir2014communication}
uses local subproblems corrected by the global gradient.
DiSCO~\cite{zhang2015disco} implements an inexact damped Newton method using
distributed preconditioned conjugate gradients.
The statistically preconditioned accelerated gradient method
of~\cite{hendrikx2020statistically} uses a local empirical objective as a
preconditioner for the global problem.

The closest work to ours is~\cite{agafonov2021accelerated}. The authors use the
local Hessian at the master node as an inexact Hessian of the global objective
and apply an accelerated cubic Newton method. They further combine the
finite-sum convergence rate with SAA and obtain a guarantee for the original
stochastic optimization problem. We use the same statistical reduction,
communication model, and local-Hessian construction. The difference is that
we replace their accelerated cubic Newton method with AINE.

Several communication-efficient methods under similarity have appeared after
\cite{agafonov2021accelerated}. Kovalev et al.~\cite{kovalev2022optimal}
proposed an accelerated gradient-sliding method that matches lower complexity
bounds on both communication rounds and local gradient computations in the
full-participation strongly convex setting. Khaled and
Jin~\cite{khaled2022faster} combined second-order similarity with client
sampling, approximate proximal steps, and variance reduction.
Lin et al.~\cite{lin2023stochastic} studied average second-order similarity,
proposed the SVRS and AccSVRS methods, and established nearly matching lower
bounds. Jiang et al.~\cite{jiang2024federated} revisited DANE and proposed
DANE+ and FedRed, which allow inexact local solvers and improve local
computational complexity. Their subsequent stabilized proximal-point
methods~\cite{jiang2024stabilized} support partial client participation,
stochastic local solvers, acceleration, and adaptive line search.

These works consider broader communication and local computation trade-offs
for finite-sum or federated optimization. Our goal is different. We study the
rate obtained by inserting an optimal inexact second-order method into the
second-order SAA construction of~\cite{agafonov2021accelerated}. Therefore,
Table~\ref{tab:main_comparison} compares methods within this construction.

\paragraph{Distributed and inexact second-order methods.}
Another line of work develops distributed Newton-type methods that learn or
compress approximations of global curvature. FedNL~\cite{safaryan2022fednl} learns
compressed local Hessian approximations and provides variants with partial
participation, cubic regularization, and line search.
FLECS~\cite{agafonov2022flecs} combines low-dimensional Hessian sketches with
compression to reduce the memory and communication costs of Hessian learning.
In contrast, our method neither learns nor communicates an approximation of
the global Hessian. It uses the local Hessian available at the master node,
while the workers communicate gradients only.

For centralized convex optimization, Ghadimi
et al.~\cite{ghadimi2017second} introduced accelerated cubic regularization
with inexact Hessians. Inexactness was subsequently studied for arbitrary-order
tensor methods in~\cite{agafonov2023inexact}. Globally convergent accelerated
second-order methods with stochastic gradient and Hessian oracles were
developed in~\cite{antonakopoulos2022extra,agafonov2024advancing}.
Chen et al.~\cite{chen2026optimal} proposed AINE and established the optimal
inexact second-order oracle complexity given by~\eqref{eq:aine-complexity} for
convex objectives with Lipschitz continuous Hessians. We use AINE without
modifying its acceleration rule. Our analysis concerns its distributed
implementation under statistical similarity, its restart for strongly convex
finite-sum objectives, and the subsequent SAA transfer.

\subsection{Our contribution}

The main contribution of this paper is twofold.

\paragraph{Distributed finite-sum optimization.}
We apply AINE to the strongly convex finite-sum
problem~\eqref{eq:finite_sum_problem}. Statistical
similarity~\eqref{eq:statistical_similarity} makes the Hessian available at the
master node a $\beta$-inexact Hessian~\eqref{eq:inexact_hessian}, so the AINE
oracle can be implemented by~\eqref{eq:crn-oracle}. We give a distributed
implementation that uses two gradient aggregations per iteration, keeps the
local Hessian at the master node, and attains the restarted communication
complexity~\eqref{eq:restart-complexity}.

\paragraph{Distributed stochastic optimization.}
We combine the finite-sum result with the SAA
transfer~\eqref{eq:saa_transfer} and obtain the population
guarantee~\eqref{eq:aine_population_rate}. The method reaches the statistical
accuracy in the number of communication rounds given
by~\eqref{eq:round-complexity}. The largest number of workers that preserves
this complexity is given by~\eqref{eq:parallelization}. Within the second-order
SAA construction of~\cite{agafonov2021accelerated}, this improves the
communication dependence on the total sample size from $N^{1/6}$ to
$N^{1/7}$.

\section{Distributed AINE under Statistical Similarity}
\label{sec:distributed_aine}

We call a symmetric matrix $H(x)$ a $\beta$-inexact Hessian of $F$ at $x$ if
\begin{equation}
\label{eq:inexact_hessian}
\|H(x)-\nabla^2F(x)\|\leq\beta.
\end{equation}
For the SAA problem~\eqref{eq:finite_sum_problem}, we use
$H(x)=\nabla^2f_1(x)+\mu I$. The common quadratic regularizer cancels in the
Hessian difference. Therefore, the statistical similarity
bound~\eqref{eq:statistical_similarity} implies~\eqref{eq:inexact_hessian}.

We apply AINE~\cite[Algorithm~2]{chen2026optimal}. The algorithm below
specializes it to the second-order case $p=2$ with Hessian inexactness $\beta$.

A $\beta$-inexact Monteiro--Svaiter oracle queried at $z$ returns $(y,\lambda)$
such that
\begin{equation}\label{eq:ms-oracle}
 \lVert\nabla F(y)+\lambda(y-z)\rVert
 \leq\frac\lambda2\lVert y-z\rVert,\qquad
 \lambda\geq c\bigl(\beta+L\lVert y-z\rVert\bigr).
\end{equation}
By~\cite[Lemma~3.1]{chen2026optimal}, it is implemented with the inexact Hessian
$H(z)$ by the cubic-regularized Newton subproblem
\begin{align}\label{eq:crn-oracle}
 \widetilde F(u,z)={}&F(z)+\langle\nabla F(z),u-z\rangle\notag\\
 &+\frac12\langle(H(z)+2\beta I)(u-z),u-z\rangle
 +\frac L3\lVert u-z\rVert^3,\notag\\
 y={}&\arg\min_u\widetilde F(u,z),\notag\\
 \lambda={}&2\beta+L\lVert y-z\rVert.
\end{align}

We use the master/workers architecture of
\cite[Section~3]{agafonov2021accelerated}.  Without loss of generality, agent
$1$ is chosen as the central node (server), and the remaining $m-1$ agents are
workers connected to the server.  Each agent $k$ stores its local objective
$f_k$.  At an oracle query point $z$, the server broadcasts $z$ to the workers.
Every worker computes $\nabla f_k(z)$ and sends this vector to the server, which
forms
\[
 \nabla F(z)=\frac1m\sum_{k=1}^m\nabla f_k(z)+\mu(z-x_0).
\]
The server computes the local Hessian $\nabla^2f_1(z)$, constructs the model
$\widetilde F(\cdot,z)$ in~\eqref{eq:crn-oracle}, and obtains $y$ by minimizing
this model.  It then broadcasts $y$ to the workers.  The workers return
$\nabla f_k(y)$, allowing the server to form $\nabla F(y)$ and complete the
AINE update.

Thus, one AINE iteration requires two communication rounds: one aggregation at
$z$ and one at $y$.  Each communication transmits only vectors, while the
Hessian $\nabla^2f_1(z)$ is computed once per iteration and remains on the
server.  The local sample size on the server may also be increased; this
reduces the inexactness parameter $\beta$ in
~\eqref{eq:statistical_similarity} without changing the communicated objects.

\begin{algorithm}
\caption{AINE specialized to $p=2$ with Hessian inexactness $\beta$
\cite[Algorithm~2]{chen2026optimal}}\label{alg:aine}
\begin{algorithmic}[1]
\Require $x_0$, horizon $t$, the oracle \eqref{eq:ms-oracle}, and a known
bound $D_0\geq\lVert x_0-x_F^*\rVert^2/2$
\State $v_0=z_0=x_0$, $A_0=0$
\State $(y_0,\lambda_0)\gets\mathcal O_2^\beta(z_0)$
\For{$j=0,\ldots,t-1$}
  \State If $j=0$, set $\lambda'_j=\lambda_0$
  \State Elif $j=1$ or $A_j>2A_s$, store $s=j$, $A_s=A_j$, and set
  \Statex \hspace{2em}
  $\displaystyle\lambda'_j=\Theta\left(
  \beta+\left(\frac{D_0L^2}{A_s^{3/2}}\right)^{2/7}\right)$
  \State Otherwise set $\lambda'_j=\lambda'_s$
  \State $a'_{j+1}\gets
  (1+\sqrt{1+4\lambda'_jA_j})/(2\lambda'_j)$,
  $A'_{j+1}\gets A_j+a'_{j+1}$
  \State $z_j\gets(A_jx_j+a'_{j+1}v_j)/A'_{j+1}$
  \If{$j>0$}
    \State $(y_j,\lambda_j)\gets\mathcal O_2^\beta(z_j)$
  \EndIf
  \State $\theta_j\gets\min\{1,\lambda'_j/\lambda_j\}$,
  $a_{j+1}\gets\theta_ja'_{j+1}$,
  $A_{j+1}\gets A_j+a_{j+1}$
  \State $x_{j+1}\gets
  ((1-\theta_j)A_jx_j+\theta_jA'_{j+1}y_j)/A_{j+1}$
  \State $v_{j+1}\gets v_j-a_{j+1}\nabla F(y_j)$
\EndFor
\State \Return $x_t$
\end{algorithmic}
\end{algorithm}

The parameter $D_0$ need not be the exact initial distance: any known upper
bound on $\lVert x_0-x_F^*\rVert^2/2$ is sufficient.  In particular, a
certified radius $R_0\geq\lVert x_0-x_F^*\rVert$ allows the choice
$D_0=R_0^2/2$.  Theorem~3.1 of~\cite{chen2026optimal}, specialized to $p=2$
with Hessian inexactness $\beta$, yields
\begin{equation}\label{eq:aine-rate}
 F(x_t)-F(x_F^*)
 =O\left(
 \frac{\beta D_0}{t^2}
 +\frac{L D_0^{3/2}}{t^{7/2}}
 \right).
\end{equation}
Equivalently, the number of AINE iterations required to reach objective
accuracy $\varepsilon$ is
\begin{equation}\label{eq:aine-complexity}
 O\left(
 \sqrt{\frac{\beta R_0^2}{\varepsilon}}
 +\left(\frac{LR_0^3}{\varepsilon}\right)^{2/7}
 \right),\qquad R_0\geq\lVert x_0-x_F^*\rVert.
\end{equation}

\subsection{Restarted method for strongly convex objectives}

The empirical objective $F$ in~\eqref{eq:finite_sum_problem} is $\mu$-strongly
convex.  We apply the restart construction of
\cite[Section~3]{agafonov2021accelerated} to the convex rate \eqref{eq:aine-rate}.

\begin{algorithm}
\caption{Restarted distributed AINE}\label{alg:restart}
\begin{algorithmic}[1]
\Require $z_0$, $R_0\geq\lVert z_0-x_F^*\rVert$, and $\mu>0$
\For{$s=1,2,\ldots$}
  \State $R_{s-1}\gets R_0/2^{s-1}$
  \State $x_0^{(s)}\gets z_{s-1}$, $v_0^{(s)}\gets z_{s-1}$,
  $z_0^{(s)}\gets z_{s-1}$, $A_0^{(s)}\gets0$
  \State $D_{0,s}\gets R_{s-1}^2/2$
  \State Choose
  \begin{equation}\label{eq:stage-length}
   t_s=\left\lceil C\max\left\{
   \sqrt{\frac\beta\mu},
   \left(\frac{LR_{s-1}}\mu\right)^{2/7}
   \right\}\right\rceil
  \end{equation}
  for a sufficiently large universal constant $C$
  \State Run Algorithm~\ref{alg:aine} for $t_s$ iterations with the initialized
  state $(x_0^{(s)},v_0^{(s)},z_0^{(s)},A_0^{(s)})$ and bound $D_{0,s}$
  \State $z_s\gets x_{t_s}^{(s)}$
\EndFor
\end{algorithmic}
\end{algorithm}

\begin{cor}\label{cor:restart-rate}
The iterates of Algorithm~\ref{alg:restart} satisfy
\begin{equation}\label{eq:restart-stage-rate}
 \lVert z_s-x_F^*\rVert\leq R_0\,2^{-s},\qquad
 F(z_s)-F(x_F^*)\leq\frac{\mu R_0^2}{2}\,2^{-2s}.
\end{equation}
Counting two communication rounds per iteration, set
\begin{equation}\label{eq:restart-scales}
 \tau_1=1+C\left(\frac{LR_0}\mu\right)^{2/7},\qquad
 \tau_2=1+C\sqrt{\frac\beta\mu}.
\end{equation}
For an arbitrary communication budget $T$, let
\begin{equation}\label{eq:completed-stages}
 S(T)=\left\lfloor\frac{T}{2\max\{\tau_1,\tau_2\}}\right\rfloor,
 \qquad x_T:=z_{S(T)}.
\end{equation}
Thus $x_T$ is the last completed restart point if the budget expires inside a
stage.  The iterates satisfy
\begin{equation}\label{eq:communication-rate}
 \lVert x_T-x_F^*\rVert
 \leq 2R_0\exp\left(-\frac{(\ln 2)T}
 {2\max\{\tau_1,\tau_2\}}\right).
\end{equation}
Moreover, reaching $F(z_s)-F(x_F^*)\leq\varepsilon$ requires
\begin{equation}\label{eq:restart-complexity}
 T=O\left(
  \left(\frac{LR_0}\mu\right)^{2/7}
  +\left(1+\sqrt{\frac\beta\mu}\right)
  \log\frac{\mu R_0^2}{\varepsilon}
 \right)
\end{equation}
communication rounds.
\end{cor}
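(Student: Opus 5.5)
The argument is an induction on the stage index $s$. Each stage runs the convex method, and its guarantee is converted into a distance bound through $\mu$-strong convexity. The inductive hypothesis is $\lVert z_{s-1}-x_F^*\rVert\leq R_{s-1}=R_0 2^{-(s-1)}$. For $s=1$ this is exactly the input assumption of Algorithm~\ref{alg:restart}.

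\textbf{Single stage.} Under the hypothesis, $D_{0,s}=R_{s-1}^2/2$ is a valid bound in Algorithm~\ref{alg:aine}. The convex rate \eqref{eq:aine-rate} then gives
\[
F(z_s)-F(x_F^*)\leq c_0\left(\frac{\beta R_{s-1}^2}{t_s^2}+\frac{L R_{s-1}^3}{t_s^{7/2}}\right)
\]
for a universal constant $c_0$.

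From the stage length \eqref{eq:stage-length} we have $t_s^2\geq C^2\beta/\mu$, so the first term is at most $c_0\mu R_{s-1}^2/C^2$. We also have $t_s^{7/2}\geq C^{7/2}LR_{s-1}/\mu$, so the second term is at most $c_0\mu R_{s-1}^2/C^{7/2}$.

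Choosing $C$ large enough, namely $c_0(C^{-2}+C^{-7/2})\leq 1/8$, gives
\[
F(z_s)-F(x_F^*)\leq \frac{\mu R_{s-1}^2}{8}=\frac{\mu R_0^2}{2}2^{-2s}.
\]
Strong convexity gives $\frac\mu2\lVert z_s-x_F^*\rVert^2\leq F(z_s)-F(x_F^*)$, hence $\lVert z_s-x_F^*\rVert\leq R_{s-1}/2=R_s$. This closes the induction and proves \eqref{eq:restart-stage-rate}.

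\textbf{Communication count.} Since $R_{s-1}\leq R_0$, each stage satisfies $t_s\leq 1+C\max\{\sqrt{\beta/\mu},(LR_0/\mu)^{2/7}\}\leq\max\{\tau_1,\tau_2\}$, using $\lceil a\rceil\leq 1+a$. At two rounds per iteration, a budget $T$ therefore completes at least $S(T)$ stages. The stage bound then gives $\lVert x_T-x_F^*\rVert\leq R_0 2^{-S(T)}$. Using $S(T)\geq T/(2\max\{\tau_1,\tau_2\})-1$ and $2^{-S}\leq 2\exp(-(\ln2)\,T/(2\max\{\tau_1,\tau_2\}))$ yields \eqref{eq:communication-rate}.

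\textbf{Refined complexity \eqref{eq:restart-complexity}.} Here I would not use the crude uniform bound. Instead I would sum the actual stage lengths, $\sum_{s=1}^{S}2t_s$, with $S=\lceil\frac12\log_2(\mu R_0^2/(2\varepsilon))\rceil$ stages. The $\beta$ part contributes $S(1+C\sqrt{\beta/\mu})=O((1+\sqrt{\beta/\mu})\log(\mu R_0^2/\varepsilon))$.

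The $L$ part contributes $C(LR_0/\mu)^{2/7}\sum_{s\geq1}2^{-2(s-1)/7}$. This is a convergent geometric series, because the radii halve while the exponent $2/7$ is positive, so it equals $O((LR_0/\mu)^{2/7})$. This geometric summation is the point that removes the logarithm from the $L$-term, and it is the only step that needs care.

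The main obstacle is bookkeeping. One must check that the constant $C$ in \eqref{eq:stage-length} can be chosen universally against the hidden constant of \eqref{eq:aine-rate}. One must also check that \eqref{eq:aine-rate} holds with the given $D_{0,s}$ as an upper bound rather than an equality, which the remark preceding \eqref{eq:aine-rate} guarantees.
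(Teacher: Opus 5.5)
Your proposal is correct and follows the paper's proof essentially step for step. The steps match: induction using the AINE rate with the valid bound $D_{0,s}=R_{s-1}^2/2$ plus $\mu$-strong convexity to halve the radius, the uniform bound $t_s\le\max\{\tau_1,\tau_2\}$ combined with $2^{-\lfloor a\rfloor}\le 2e^{-a\ln 2}$, and geometric summation of the $(LR_{s-1}/\mu)^{2/7}$ terms for the complexity bound. The only difference is that you make explicit the choice of $C$ via $c_0(C^{-2}+C^{-7/2})\le 1/8$, which the paper leaves implicit in "together with the stage length".
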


\begin{proof}
If $\lVert z_{s-1}-x_F^*\rVert\leq R_{s-1}$, then the initialization in
Algorithm~\ref{alg:restart} gives
$D_{0,s}=R_{s-1}^2/2\geq\lVert z_{s-1}-x_F^*\rVert^2/2$.
Hence Theorem~3.1 of~\cite{chen2026optimal}, equivalently
\eqref{eq:aine-rate}, applies at stage $s$.  Together with
\eqref{eq:stage-length}, it gives
$F(z_s)-F(x_F^*)\leq\mu R_{s-1}^2/8$.  Strong convexity consequently gives
$\lVert z_s-x_F^*\rVert\leq R_{s-1}/2=R_s$, and induction proves
\eqref{eq:restart-stage-rate}.  Since $R_{s-1}\leq R_0$ and
$\lceil a\rceil\leq a+1$, the definitions in
\eqref{eq:stage-length}--\eqref{eq:restart-scales} imply
$t_s\leq\max\{\tau_1,\tau_2\}$.  Each stage uses two communication rounds per
iteration, so at least $S(T)$ stages are completed within the budget $T$.
Combining this fact with \eqref{eq:restart-stage-rate} and
$2^{-\lfloor a\rfloor}\leq2\exp(-a\ln2)$ proves
\eqref{eq:communication-rate}.  Finally, summing
the geometrically decreasing $(LR_{s-1}/\mu)^{2/7}$ terms and the constant
$1+\sqrt{\beta/\mu}$ term over the restart stages gives
\eqref{eq:restart-complexity}.
\end{proof}

\section{Application to Stochastic Optimization Problem}
\label{sec:4}

Combining the SAA transfer~\eqref{eq:saa_transfer}
with~\eqref{eq:communication-rate} gives
\begin{equation}\label{eq:population-rate}
 \boldsymbol F(x_T)-\boldsymbol F(x^*)
 \leq\widetO\left(
 2L_0R_0\exp\left(-\frac{(\ln 2)T}
 {2\max\{\tau_1,\tau_2\}}\right)
 +\frac{L_0R}{\sqrt N}
 \right).
\end{equation}
Thus, the population error is the sum of the optimization error and the
finite-sample approximation error.

Using $\mu=\widetO(N^{-1/2})$ and
$\beta=\widetO((m/N)^{1/2})$ from~\eqref{eq:statistical_similarity}
in~\eqref{eq:restart-scales} yields
\begin{equation}\label{eq:distributed-scales}
 \tau_1\simeq N^{1/7},\qquad \tau_2\simeq m^{1/4}.
\end{equation}
Thus the optimization and statistical terms in \eqref{eq:population-rate}
have the same order after
\begin{equation}\label{eq:round-complexity}
 T=\widetO\left(\max\{N^{1/7},m^{1/4}\}\right)
\end{equation}
communication rounds.  Full parallelization means
increasing $m$ only until it starts increasing the required number of rounds.
Balancing the two terms in \eqref{eq:distributed-scales} gives
\begin{equation}\label{eq:parallelization}
 m=N^{4/7},\qquad T=\widetO(N^{1/7}).
\end{equation}
For comparison, the accelerated cubic method in
\cite[Section~4]{agafonov2021accelerated} gives $m=N^{2/3}$ and
$T=\widetO(N^{1/6})$.


\section{Numerical Experiments}
\label{sec:experiments}

We evaluate the construction of Section~\ref{sec:distributed_aine} on logistic
regression.  The experiments
compare the server and global Hessians, report convergence against both outer
iterations and communicated bits, and examine how the server sample size affects
the accelerated method.

\subsection{Setup}

We use the LIBSVM~\cite{chang2011libsvm} \texttt{a9a} training set: rows normalized to unit Euclidean
norm and one deterministic tail sample dropped, leaving $N=32{,}560$ examples of
dimension $d=123$ split evenly over $m$ nodes, so each node holds $n=N/m$ of them
and node~$1$ is the server.  Each realization is a labelled example
$\xi=(a,b)\in\R^d\times\{-1,+1\}$, and $f$ in~\eqref{eq:finite_sum_problem} is
the logistic loss
\begin{equation}
    \label{eq:logistic_loss}
    f(x,\xi)=\log\bigl(1+\exp(-b\,a^\top x)\bigr).
\end{equation}
We take $x_0=\mathbf 1$ and $\mu=0$.  Unit row norms give the certified upper
bound $L \leq 1/(6\sqrt3)$ for the Hessian-Lipschitz constant.

The two partitions differ in how the index set is cut.  The \emph{IID} split
assigns each node a uniformly random slice of the examples, which is the
sampling model behind~\eqref{eq:statistical_similarity}.  The
\emph{heterogeneous} split instead draws a direction $u$ uniformly from the unit
sphere, orders the examples by the projections $\langle a_i,u\rangle$, and cuts
the ordered list into $m$ consecutive blocks of equal size.  Writing
$i_1,\ldots,i_N$ for the resulting order,
\begin{equation}
    \label{eq:feature_split}
    \langle a_{i_1},u\rangle\leq\cdots\leq\langle a_{i_N},u\rangle,
    \qquad
    \mathcal I_k=\{\,i_{(k-1)n+1},\ldots,i_{kn}\,\},
    \quad k=1,\ldots,m .
\end{equation}
Different nodes then hold different regions of feature space and the examples on
a node are not an independent sample of the population, so the reasoning behind
\eqref{eq:statistical_similarity} no longer applies.  Figures~\ref{fig:exp-iid}
and~\ref{fig:exp-hetero} fix $m=16$, and Figure~\ref{fig:exp-setups} varies the
block.  Figures~\ref{fig:exp-iid}, \ref{fig:exp-hetero},
and~\ref{fig:exp-setups} report medians over three seeds, while
Figure~\ref{fig:exp-similarity} reports medians and interquartile ranges over
five seeds.

All methods aggregate the gradient of the full objective and differ only in the
Hessian.  Cubic regularized Newton~\cite{nesterov2006cubic}, its
Nesterov-accelerated form~\cite{nesterov2008accelerating}, and
Algorithm~\ref{alg:aine} are each run with the exact $\nabla^2F$ and with the
server $\nabla^2f_1$; the exact-Hessian runs are controls, since assembling
$\nabla^2F$ requires every node to transmit a $d\times d$ block.  Only the uplink
is charged.  The communication axis reports payload per worker: a vector costs
$32d$ bits and a symmetric Hessian $32d(d+1)/2$ bits, so one Hessian is worth
$(d+1)/2$ gradients.  
The accelerated methods spend two gradient aggregations per outer
iteration against one for the unaccelerated method.

\paragraph{Constants.}
Two quantities have to be fixed before the runs, the cubic regularization
constant and the inexactness parameter, and they are fixed in different ways.

The cubic constant is tuned per method.  For each method we sweep a grid of
  values jointly with the free parameter of its accelerated scheme.  For ACRN
  this is the weight of the extrapolation step, the coefficient with which the
  previous iterate enters the point at which the model is built, and for
  Algorithm~\ref{alg:aine} it is the scale of the guess $\lambda'$ in the
  Guess subroutine.  The plots therefore compare the best stable configuration of
  each method rather than all methods under one common cubic constant.

The inexactness parameter is swept on the same grid.  The condition
     Algorithm~\ref{alg:aine} verifies at every iteration, the first
     in~\eqref{eq:ms-oracle}, involves only $\nabla F(y)$, $\lambda$ and $y-z$, never
     $\nabla^2F$.  A run whose $\beta$ is too small to satisfy it stops there, and we
     report the best of the runs that complete.  ACRN performs no such check, so its
     configuration is selected on the final gap alone.      
\subsection{Convergence}

With the exact Hessian, Algorithm~\ref{alg:aine} attains the smallest final
objective gap in Figures~\ref{fig:exp-iid} and~\ref{fig:exp-hetero}~ Its curve
is close to that of the Nesterov-accelerated cubic method over much of the run,
and the separation becomes visible only at the longer horizon shown here.

Under the IID split, Algorithm~\ref{alg:aine} driven by $\nabla^2f_1$ reaches
the smallest final gap among the three server-Hessian variants.  Its normalized
communication curve is also below the other server-Hessian accelerated curve
at the end of the reported run.  Under the heterogeneous split it is second
among the server-Hessian accelerated methods, consistently with the larger
Hessian mismatch measured for this split in Figure~\ref{fig:exp-similarity}.


\begin{figure}[h]
\makebox[\textwidth][l]{%
\includegraphics{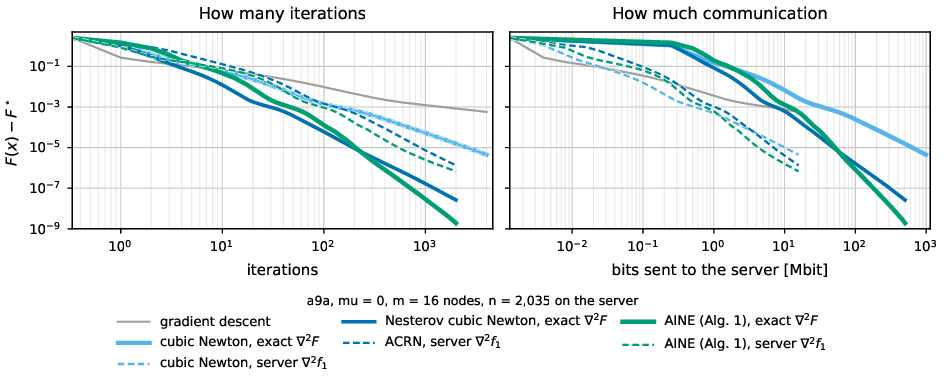}}
\caption{IID split.}
\label{fig:exp-iid}
\end{figure}


\begin{figure}[h]
\makebox[\textwidth][l]{%
\includegraphics{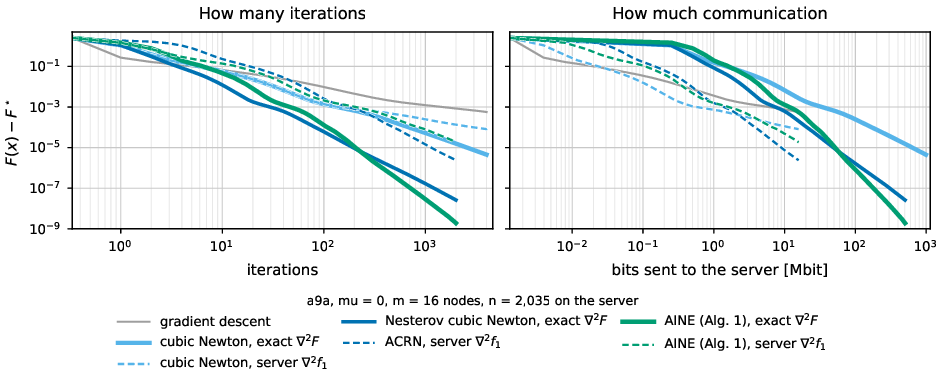}}
\caption{Heterogeneous split.}
\label{fig:exp-hetero}
\end{figure}

\FloatBarrier
\subsection{The mismatch and the certificate}

Figure~\ref{fig:exp-similarity} measures
$\lVert\nabla^2f_1(x_t)-\nabla^2F(x_t)\rVert$ along the run. 
Between the two splits at the same block size it differs by
more than an order of magnitude, which is what separates the two figures above.

The covariance certificate shown in Figure~\ref{fig:exp-similarity}~ is nearly flat in $n$ and
substantially exceeds the realized mismatch on this problem.  Supplying such a
conservative value increases the term $2\beta I$ in~\eqref{eq:crn-oracle} and
reduces the practical benefit of the server Hessian.  Thus, constructing a
computable certificate that follows the realized similarity remains an important
practical issue.

\begin{figure}[!htbp]
\centering
\includegraphics[width=0.80\textwidth]{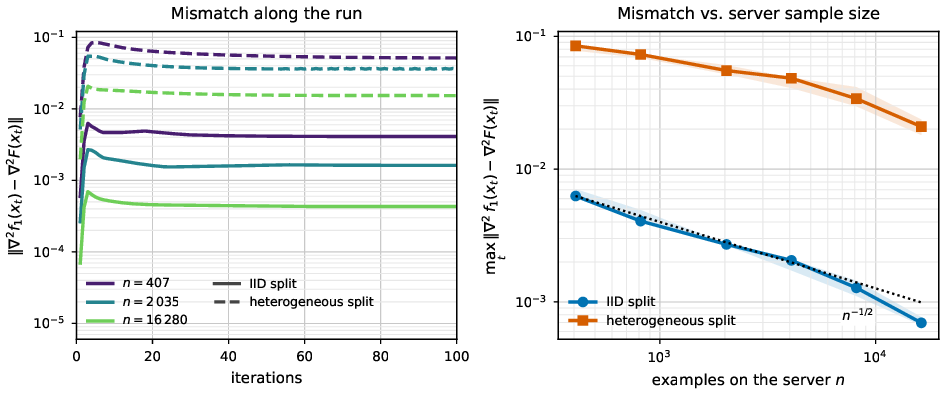}
\caption{Hessian mismatch.}
\label{fig:exp-similarity}
\end{figure}

\FloatBarrier
\subsection{Server block size}

Figure~\ref{fig:exp-setups} isolates the effect of the server block.  Under the
IID split, the final gap of the server-Hessian AINE run decreases as the server
block grows over the range considered.  The corresponding changes for ACRN \cite{agafonov2021accelerated}
and for the uncorrected substitution are smaller at this horizon.  The three
methods use the same full gradients and differ in how they incorporate the
server Hessian.  Under the heterogeneous split, increasing the block gives a
much smaller improvement, consistently with the slower decay of the measured
mismatch in Figure~\ref{fig:exp-similarity}.

\begin{figure}[!htbp]
\centering
\includegraphics[width=0.85\textwidth]{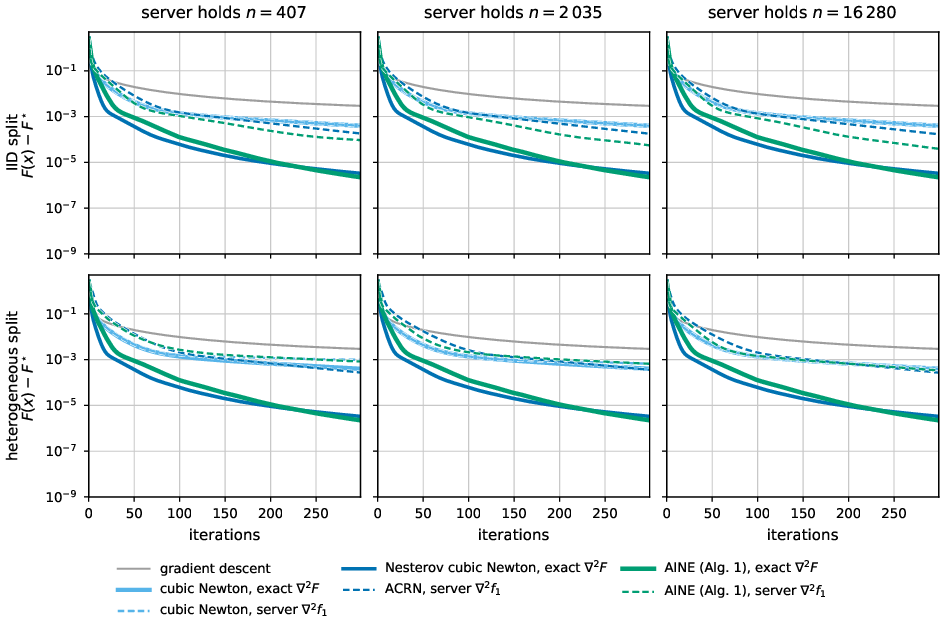}
\caption{Split against server block size, $\mu=0$.~}
\label{fig:exp-setups}
\end{figure}

\FloatBarrier

\section{Conclusion}

In this paper, we applied optimal acceleration with inexact Hessians to
distributed stochastic optimization under statistical similarity. We showed
that the Hessian of the local empirical objective stored at the server can be
used as an inexact Hessian of the global empirical objective. This gives a
distributed implementation of AINE in which the workers communicate only
gradients. Combined with restarts and sample average approximation, the method
reaches the statistical accuracy in
$\widetO\left(\max\{N^{1/7},m^{1/4}\}\right)$ communication rounds. Choosing
$m=N^{4/7}$ gives the complexity $\widetO\left(N^{1/7}\right)$ and improves
the dependence on the sample size compared with the previous accelerated
cubic Newton construction.

The experiments support the role of statistical similarity in this
construction. Under an IID partition, increasing the server sample size reduces
the measured local-to-global Hessian mismatch and improves the final accuracy of
the server-Hessian AINE run. Under the heterogeneous partition, the mismatch is
larger and decays more slowly, and the same gain is substantially weaker. The
results therefore agree with the dependence on statistical similarity predicted
by the theory.

\section*{Acknowledgments}

The research was supported by Russian Science Foundation (project
No.~23-11-00229), \url{https://rscf.ru/en/project/23-11-00229/}.

\appendix

\bibliographystyle{unsrtnat}
\bibliography{agafonov}
\label{LastBibItem:}






}
\label{article_end}

\endpaper
\end{document}